\documentclass[11pt, reqno]{amsart}

\usepackage{amsmath,amssymb,amsthm, epsfig}
\usepackage{hyperref}
\hypersetup{colorlinks=true, linkcolor=blue, citecolor=red, linktoc=page, pdftitle={Rectifiability of free boundaries}, pdfauthor={R. Teymurazyan}}
\usepackage{ulem}
\usepackage[mathscr]{euscript}
\usepackage[latin1]{inputenc}

\usepackage{xcolor}
\usepackage{hyperref}
\usepackage[nameinlink]{cleveref}
\hypersetup{
	colorlinks,
	linkcolor={blue!50!black},
	citecolor={blue!50!black},
	urlcolor={blue!50!black}
}

\usepackage{esint}
\usepackage{enumerate}
\usepackage{mathrsfs}
\usepackage[usenames,dvipsnames]{pstricks}
\usepackage{pst-grad} 
\usepackage{pst-plot} 

\newtheorem{theorem}{Theorem}
\newtheorem{definition}{Definition}

\newtheorem{lemma}{Lemma}
\newtheorem{proposition}{Proposition}
\newtheorem{corollary}{Corollary}

\date{}
\numberwithin{equation}{section}
\numberwithin{theorem}{section}
\numberwithin{lemma}{section}
\numberwithin{corollary}{section}
\numberwithin{remark}{section} 
\numberwithin{proposition}{section}
\numberwithin{definition}{section}

\newcommand{\R}{\mathbb{R}}

\newcommand{\dist}{\operatorname{dist}}
\newcommand{\loc}{\operatorname{loc}}
\def \H {\mathcal{H}^{n-1}}

\newcommand{\reduced}{\operatorname{red}}
\newcommand{\interior}{\operatorname{int}}

\begin{document}
\subjclass[2020]{Primary 35R35. Secondary 35A21, 35J70}




\keywords{Quenching problem; Free boundary; Rectifiability; Hausdorff estimates; Degenerate PDEs}

\title[Rectifiability of free boundaries]{Rectifiability of free boundaries in singular diffusion problems}

\author[R. Teymurazyan]{Rafayel Teymurazyan}
	\address{Applied Mathematics and Computational Sciences Program (AMCS), Computer, Electrical and Mathematical Sciences and Engineering Division (CEMSE), King Abdullah University of Science and Technology (KAUST), Thuwal, 23955-6900, Kingdom of Saudi Arabia}{}
	\email{rafayel.teymurazyan@kaust.edu.sa}

\begin{abstract}
For minimizers of a degenerate diffusion functional with a singular reaction term, we prove that the free boundary is $(n-1)$-rectifiable. The argument relies on a suitable integrability property, derived from a pointwise gradient estimate, combined with a Hausdorff dimension estimate for a portion of the zero set.
\end{abstract}

\maketitle

\section{Introduction}
We study the rectifiability of the free boundary of minimizers of
\begin{equation}\label{mimization functional}
J_\Omega(v):=\int_{\Omega}\left(\frac{|Dv|^p}{p} + v^\gamma \right)\,dx
\end{equation}
over
\begin{equation}\label{minimization set}
    \mathbb{K}:=\left\{v\in W^{1,p}(\Omega):\,\,v-g\in W^{1,p}_0(\Omega), \ v\geq 0 \mbox{ a.e. in } \Omega\right\}.
\end{equation}
Here $\Omega\subset\R^n$ is a bounded domain, $\gamma\in(0,1)$, $p\ge2$ and $g\in W^{1,p}(\Omega)$ is a non-negative function. As shown in \cite[Proposition 2.1]{ATU25}, minimizers satisfy
\begin{equation}\label{EL}
\Delta_p u = \gamma u^{\gamma-1}
\quad\text{in}\quad\{u>0\}\cap\Omega,
\end{equation}
where $\Delta_p u:=\mathrm{div}(|Du|^{p-2}Du)$ is the $p$-Laplacian. Since $\gamma<1$, the right-hand side in \eqref{EL} blows up as $u\to0^+$, making the study of the behavior of minimizers near the free boundary $\partial\{u>0\}$ highly nontrivial.

The problem was studied by Phillips in the linear setting ($p=2$) in \cite{P83,Ph83} (see also \cite{AP86,AT13}), proving that minimizers are locally of the class $C^{1,\frac{\gamma}{2-\gamma}}$, and that the free boundary has locally finite $(n-1)$-dimensional Hausdorff measure and is rectifiable. As the approach in \cite{P83,Ph83} relies on the linearity of the operator, it does not extend directly to $p>2$. Nevertheless, using geometric tangential analysis, several of these results were established in the degenerate setting in \cite{ATV23,ATU25}.

Despite recent advances, the rectifiability of the free boundary remained open in the nonlinear setting. In this note, we close that gap by proving that the free boundary has a measure-theoretic outward normal at $\H$-almost every point. More precisely, our main result shows that if $u$ is a minimizer of \eqref{mimization functional}, then locally,
\begin{equation}\label{reduced}
    \H(\partial\{u>0\}\setminus\partial_{\reduced}\{u>0\})=0.
\end{equation}
The proof relies on two main ingredients: the integrability of a nonlinear transformation of $u$, derived from the pointwise gradient estimate established in \cite{ATU25}, and a Hausdorff dimension estimate for a portion of the zero set. Since $\{u>0\}$ has locally finite perimeter, its reduced boundary is locally $(n-1)$-rectifiable, \cite[Corollary 16.1]{M12}. Hence, \eqref{reduced} establishes the same property for the free boundary $\partial\{u>0\}$.

The limiting cases $\gamma=0$ and $\gamma=1$ correspond to the cavity and obstacle problems, respectively. For $\gamma=0$, rectifiability was obtained in \cite[Corollary 9.2]{DP05}. For $\gamma=1$, the free boundary has locally finite $(n-1)$-dimensional Hausdorff measure \cite{LS03,CLRT14} and is rectifiable for $p=2$, \cite{C98}. For $p>2$, the free boundary of the obstacle problem is countably $(n-1)$-rectifiable under a concavity condition that excludes constant obstacles, \cite[Theorem 1.8]{FKR17}.

\vspace{0.2 cm}

The functional \eqref{mimization functional} models the competition between nonlinear diffusion and a singular reaction term. Such structures arise in reaction-diffusion models with thresholding effects, where the unknown represents quantities such as density, pressure, or concentration. In these settings, the positivity set corresponds to an active region, while the zero phase represents inactive zones, and the free boundary describes the interface between them. 

\vspace{0.2cm}

This note is organized as follows. After listing the main notations and some known results in \Cref{s2}, in \Cref{s3} we prove an integrability property, \Cref{integrability lemma}. In \Cref{s4}, we show that portions of the zero set with vanishing Lebesgue measure are negligible in a certain Hausdorff dimension, \Cref{Hausdorff local}. Finally, \Cref{s5} extends \eqref{reduced} into the degenerate setting, \Cref{Hausdorff reduced}. 

\section{Preliminaries}\label{s2}
In this section, we gather the known results on minimizers of \eqref{mimization functional}-\eqref{minimization set} that will be used in this work. We start by introducing some notations and recalling several definitions that are used throughout the work.
\subsection{Definitions}
The ball of radius $r$ centered at $x$ is denoted by $B_r(x)$, and we write $B_r:=B_r(0)$. Also $J_r:=J_{B_r}$, where $J_{B_r}$ is defined by \eqref{mimization functional}. $|E|$ is the $n$-dimensional Lebesgue measure of the set $E$, and $\mathcal{H}^d(E)$ denotes its $d$-dimensional Hausdorff measure.
\begin{definition}\label{reduced free boundary}
    A point $x\in B_1$ is said to be on the \textit{reduced} free boundary of $\{u>0\}$ in $B_1$, and we write $x\in\partial_{\reduced}\{u>0\}$, if there exists a unique unit vector $\nu(x)$ such that
    $$
    \int_{B_r(x)}\left|\chi_{\{u>0\}}-\chi_{ \{ y \, : \, (y-x)\cdot\nu(x)<0 \} } \right| \, dy = o(r^n),
    $$
    as $r\to0$.   
\end{definition}
By De Giorgi's structure theorem, the reduced boundary of a set of locally finite perimeter is a locally $\H-$rectifiable set in $\R^n$ (see, for example, \cite[Corollary 16.1]{M12}). Thus, the reduced free boundary is the subset of the free boundary, where $\{u>0\}$ has a tangent plane in measure with normal $\nu(x)$.
\begin{definition}\label{localminimizer}
A nonnegative function $u\in W^{1,p}(\Omega)$ is a local minimizer of \eqref{mimization functional} in $B_r$, if
$$
J_r(u)\le J_r(v),
$$
for all $v\in W^{1,p}(B_r)$ such that $v-u\in W_0^{1,p}(B_r)$.
\end{definition}
\subsection{Known results}
As it is established in \cite[Theorem 2.1]{ATV23}, minimizers of \eqref{mimization functional}-\eqref{minimization set} exist, $0\le u\le\|g\|_{L^\infty(\Omega)}$, and satisfy the Euler-Lagrange equation \eqref{EL}, \cite[Proposition 2.1]{ATU25}. Furthermore, \cite[Theorem 3.1]{ATV23}, we have
\begin{equation}\label{local regularity}
    \|u\|_{C^{1,\beta}(\Omega')}\le C,
\end{equation}
for every $\Omega'\subset\Omega$, and $C>0$ depends only on $\dist(\Omega',\partial\Omega)$, $\|u\|_{L^\infty(\Omega)}$, $p$, $\gamma$ and $n$. Here $\beta:=\left\{\alpha_p^-,\frac{\gamma}{p-\gamma}\right\}$, $\alpha_p$ is the optimal (unknown for $n\ge3$) regularity exponent for the gradient of $p$-harmonic functions, and $c^-$ means any real number less than $c$. Moreover, as shown in \cite[Theorem 4.1 and Theorem 6.1]{ATV23}, across the free boundary points, one has
\begin{equation}\label{growth}
    u\le Cr^{\frac{p}{p-\gamma}}\,\,\,\text{in}\,\,\,B_r(y),
\end{equation}
for any $y\in\partial\{u>0\}\cap\Omega'$, and for any $r\in(0,r_0)$, with some universal constants $C>0$ and $r_0>0$. For $x_0\in\overline{\{u>0\}}$, we also have the following non-degeneracy for minimizers, \cite[Proposition 4.1]{ATU25}
\begin{equation}\label{nondegeneracy}
    \sup_{\partial B_r(x_0)}u\ge cr^{\frac{p}{p-\gamma}},
\end{equation}
for any $r<\dist(x_0,\partial \Omega)$. Here $c>0$ depends only on $p$, $\gamma$ and $n$. Additionally, the following positive density result holds, \cite[Corollary 4.2]{ATU25},
\begin{equation}\label{positive density}
    \frac{|\{u>0\}\cap B_r(x_0)|}{|B_r(x_0)|}\ge\tau,
\end{equation}
where $x_0\in\Omega'\cap\partial\{u>0\}$, $\Omega'\Subset\Omega$, $r>0$ is small enough, and $\tau \in(0,1)$ depends only on $\Omega'$, $p$, $\gamma$ and $n$. Furthermore, we know that $\partial\{u>0\}$ has locally finite $\H$-measure, \cite[Theorem 6.1]{ATU25}, and $\{u>0\}$ has locally finite perimeter in $\Omega$, \cite[Remark 6.1]{ATU25} (see also \cite[Theorem 4.5.11]{F69} and \cite{M12,Ph83}). 

The result below is from \cite[Theorem 3.1]{ATU25}. It provides a pointwise gradient estimate which is vital in our analysis.
\begin{theorem}\label{magic theorem}
If $u$ is a local minimizer of \eqref{mimization functional} in $B_1$, then there exists $C>0$, depending only on $p$, $\gamma$ and $n$, such that
$$
|Du(x)|^p\le C\|u\|_{L^\infty(B_1)}^{p-\gamma}\, u^{\gamma}(x),\quad x\in B_{\frac{1}{2}}.
$$
\end{theorem}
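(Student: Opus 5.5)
The plan is to combine the invariant scaling of \eqref{mimization functional} with the regularity and growth estimates \eqref{local regularity} and \eqref{growth}. After replacing $u$ by $u/\|u\|_{L^\infty(B_1)}$ (the functional changes only by a factor, and the minimizers of the new functional with $v^\gamma$ replaced by $\lambda v^\gamma$ enjoy the same estimates with constants depending on $\lambda$ in a controlled way), it suffices to prove $|Du(x)|^p\le C u^\gamma(x)$ for $x\in B_{1/2}$ with $\|u\|_{L^\infty(B_1)}\le 1$. The key observation is that if $u$ is a local minimizer in $B_\rho(x)$, then
$$
v(y):=\frac{u(x+ry)}{r^{\frac{p}{p-\gamma}}}
$$
is again a local minimizer of $J$ in $B_{\rho/r}$. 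Indeed, $|Dv|^p$ and $v^\gamma$ both scale like $r^{\frac{p\gamma}{p-\gamma}}$. Moreover, $Du(x)=r^{\frac{\gamma}{p-\gamma}}Dv(0)$. If we choose $r$ with $u(x)=r^{\frac{p}{p-\gamma}}$, so that $v(0)=1$, the claim becomes $|Dv(0)|\le C$, because then $|Du(x)|^p\le C r^{\frac{p\gamma}{p-\gamma}}=C u^\gamma(x)$.

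Let $d:=\dist(x,\partial\{u>0\})$, and let $y\in\partial\{u>0\}$ realize it. I will split into two regimes.

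\textbf{Case $d\le Kr$ for a universal $K$.} By \eqref{growth} applied at $y$, we get $u\le C(3Kr)^{\frac{p}{p-\gamma}}$ in $B_{2Kr}(x)\subset B_{3Kr}(y)$. Hence $0\le v\le C_K$ in $B_{2K}\supset B_1$. Then \eqref{local regularity} applied to the minimizer $v$ gives $|Dv(0)|\le C$. (If $r$ is not small compared to $\dist(x,\partial B_1)$, the bound is immediate from \eqref{local regularity} for $u$ itself, since $u(x)$ is then bounded below.)

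\textbf{Case $d> Kr$.} Now $B_d(x)\subset\{u>0\}$, and \eqref{growth} gives $u(x)\le Cd^{\frac{p}{p-\gamma}}$. Rescale at scale $d$ instead: $w(z):=u(x+dz)/d^{\frac{p}{p-\gamma}}$ is a minimizer, $0\le w\le C$ on $B_2$, $w>0$ in $B_1$, and $w(0)=(r/d)^{\frac{p}{p-\gamma}}=:\varepsilon$ is small. We must show $|Dw(0)|^p\le C\varepsilon^\gamma$. I expect this step to be the main obstacle, since it is exactly where the singular right-hand side and the degeneracy of $\Delta_p$ interact.

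My first attempt would be a nonnegativity/Taylor argument. We have $w\ge0$, and $w\in C^{1,\beta}(B_1)$ uniformly. Evaluating $w$ at $0-t\,Dw(0)/|Dw(0)|$ yields $|Dw(0)|\le C\varepsilon^{\frac1{1+\beta}}$. This suffices whenever $\frac{1}{1+\beta}\ge\frac{\gamma}{p}$, which holds comfortably since $\beta\le 1$ and $\gamma/p<1/2$. So the naive argument already gives $|Dw(0)|^p\le C\varepsilon^{\frac{p}{1+\beta}}\le C\varepsilon^\gamma$, and the estimate follows.

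Should the constants in \eqref{local regularity} not be uniform enough here (e.g.\ if $\beta$ must be taken close to $\alpha_p$), I would instead argue by contradiction and compactness. Suppose minimizers $w_k$ and points with $|Dw_k(0)|^p/w_k(0)^\gamma\to\infty$. Renormalize by $\delta_k:=w_k(0)$ using the invariant scaling, so that the new functions equal $1$ at the origin and are defined on larger and larger balls. Then use \eqref{nondegeneracy} and \eqref{positive density} to pass to a limit global minimizer with an unbounded gradient at the origin, contradicting \eqref{local regularity}.
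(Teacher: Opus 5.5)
The paper does not prove this estimate; it quotes it from \cite[Theorem 3.1]{ATU25}, so your argument has to stand on its own. There is a genuine gap. Your treatment of the case $d\le Kr$ is fine, but the case $d>Kr$, which you rightly flag as the heart of the matter, is not established.

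The nonnegativity/Taylor step is miscomputed. Set $e=Dw(0)/|Dw(0)|$. From $0\le w(-te)\le\varepsilon-t|Dw(0)|+Ct^{1+\beta}$, the optimal choice $t=\varepsilon^{1/(1+\beta)}$ gives $|Dw(0)|\le C\varepsilon^{\beta/(1+\beta)}$, not $C\varepsilon^{1/(1+\beta)}$. The function $(x_1^+)^{1+\beta}$ shows that $\beta/(1+\beta)$ is sharp for nonnegative $C^{1,\beta}$ functions. The condition you need, $p\beta/(1+\beta)\ge\gamma$, is then equivalent to $\beta\ge\frac{\gamma}{p-\gamma}$. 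That is exactly the borderline, not a comfortable margin. Since \eqref{local regularity} only provides $\beta=\min\{\alpha_p^-,\frac{\gamma}{p-\gamma}\}$, the argument works only when $\frac{\gamma}{p-\gamma}<\alpha_p$. In that case it applies directly to $u$ on $B_{3/4}$, and no case distinction is needed. For $n\ge3$, where $\alpha_p$ is unknown, the regime $\alpha_p\le\frac{\gamma}{p-\gamma}$ cannot be excluded. There you only obtain $|Du|^p\le C\,u^{p\beta/(1+\beta)}$, which is strictly weaker than the claim near $\{u=0\}$.

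The compactness fallback does not close this gap. After renormalizing so that $W_k(0)=1$, the $W_k$ are positive solutions of $\Delta_pW_k=\gamma W_k^{\gamma-1}$ in balls of radius tending to infinity, and their free boundaries recede to infinity. None of the available tools bounds $W_k$ from above near the origin:
\begin{itemize}
\item \eqref{growth} needs a nearby free boundary point, so it gives nothing here.
\item \eqref{nondegeneracy} and \eqref{positive density} are lower bounds.
\item Since $\Delta_pW_k\ge0$ with a right-hand side that is not a priori bounded, no Harnack inequality bounds $W_k$ from above by its value at $0$.
\end{itemize}
Without $\sup_{B_1}W_k\le C$ there is no $C^1$ compactness. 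With it, \eqref{local regularity} already gives $|DW_k(0)|\le C$, so the limit argument adds nothing.

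The missing ingredient is an intrinsic-scale bound $\sup_{B_\rho(x)}u\le C\,u(x)$, with $\rho=u(x)^{\frac{p-\gamma}{p}}$, at points lying deep inside $\{u>0\}$ relative to $\rho$. Given \eqref{local regularity}, this is equivalent to the gradient estimate itself in that regime. It requires a genuinely new use of the equation, and none of \eqref{local regularity}, \eqref{growth}, \eqref{nondegeneracy}, \eqref{positive density} supplies it.

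A secondary point concerns the normalization. Dividing by $M=\|u\|_{L^\infty(B_1)}$ replaces $v^\gamma$ by $\lambda v^\gamma$ with $\lambda=M^{\gamma-p}$. The uniformity in $\lambda$ of \eqref{growth} and \eqref{local regularity} that you invoke is asserted, not shown, and it is exactly what the factor $M^{p-\gamma}$ in the statement must account for.
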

The next result is from \cite[Proposition 3.3]{LQT15} (see also \cite[Theorem 2.8]{Ph83}).
\begin{proposition}\label{equation in the sense of distributions}
    If $\gamma\in(0,1)$, $p>1$ and $u$ is a minimizer of \eqref{mimization functional}, then
    $$
    \Delta_pu=\gamma u^{\gamma-1}\chi_{\{u>0\}}\,\,\,\text{in}\,\,\,\Omega
    $$
    in the sense of distributions.
\end{proposition}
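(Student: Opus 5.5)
The plan is to test the Euler--Lagrange equation \eqref{EL} against cut-offs of $u$ that vanish near the free boundary, and then remove the cut-off. Fix $\varphi\in C_c^\infty(\Omega)$ with $\varphi\ge0$; the general case follows by writing $\varphi=\varphi^+-\varphi^-$ after a standard mollification, or by linearity once integrability is known. For $\varepsilon>0$ let $\psi_\varepsilon\in C^\infty(\R)$ be nondecreasing, with $\psi_\varepsilon=0$ on $(-\infty,\varepsilon]$, $\psi_\varepsilon=1$ on $[2\varepsilon,\infty)$, and $0\le\psi_\varepsilon'\le 2/\varepsilon$. Since $u$ is continuous by \eqref{local regularity}, the function $\eta_\varepsilon:=\varphi\,\psi_\varepsilon(u)$ is a $W^{1,p}$ function compactly supported in the open set $\{u>0\}\cap\Omega$. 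It is therefore an admissible test function for \eqref{EL}, and we get
\begin{equation*}
\int_\Omega |Du|^{p-2}Du\cdot D\varphi\,\psi_\varepsilon(u)\,dx+\int_\Omega \varphi\,\psi_\varepsilon'(u)|Du|^p\,dx=-\gamma\int_\Omega u^{\gamma-1}\psi_\varepsilon(u)\varphi\,dx .
\end{equation*}

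\textbf{Step 1 (local integrability).} The second term on the left is nonnegative. The first term is bounded uniformly in $\varepsilon$ by $\|Du\|_{L^\infty(\operatorname{supp}\varphi)}^{p-1}\|D\varphi\|_{L^1}$, using \eqref{local regularity}. Hence $\gamma\int u^{\gamma-1}\psi_\varepsilon(u)\varphi$ is bounded independently of $\varepsilon$. Monotone convergence as $\varepsilon\to0$ then gives $u^{\gamma-1}\chi_{\{u>0\}}\in L^1_{\loc}(\Omega)$, so the right-hand side of the claimed identity is a well-defined distribution.

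\textbf{Step 2 (passing to the limit).} In the first term, $\psi_\varepsilon(u)\to\chi_{\{u>0\}}$ pointwise. Since $Du=0$ a.e. on $\{u=0\}$ (Stampacchia), dominated convergence gives the limit $\int_\Omega|Du|^{p-2}Du\cdot D\varphi\,dx$. The right-hand side converges to $-\gamma\int u^{\gamma-1}\chi_{\{u>0\}}\varphi$, by Step 1 and dominated convergence. It remains to show that the middle term tends to $0$. This is where \Cref{magic theorem} enters. Apply it to rescaled and translated copies of $u$ on small balls covering $\operatorname{supp}\varphi$; this is legitimate because a minimizer is a local minimizer in every ball, and the class of minimizers is invariant under the scaling $u(x_0+rx)/r^{p/(p-\gamma)}$. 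Together with $u\le\|g\|_{L^\infty}$, this yields $|Du|^p\le C u^\gamma$ on $\operatorname{supp}\varphi$. Therefore
\begin{equation*}
0\le\int_\Omega\varphi\,\psi_\varepsilon'(u)|Du|^p\,dx\le \frac{2C}{\varepsilon}\int_{\{\varepsilon<u<2\varepsilon\}}\varphi\,u^{\gamma}\,dx\le 4C\int_{\{\varepsilon<u<2\varepsilon\}}\varphi\,u^{\gamma-1}\,dx,
\end{equation*}
where the last inequality uses $u<2\varepsilon$ on the domain of integration. The sets $\{\varepsilon<u<2\varepsilon\}$ shrink to the empty set as $\varepsilon\to0$. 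Since $u^{\gamma-1}\chi_{\{u>0\}}\varphi\in L^1$ by Step 1, the last integral tends to $0$ by dominated convergence.

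Combining the two steps gives $\int|Du|^{p-2}Du\cdot D\varphi=-\gamma\int u^{\gamma-1}\chi_{\{u>0\}}\varphi$, which is the claimed distributional identity. The main obstacle is the vanishing of the boundary-layer term $\int\varphi\psi_\varepsilon'(u)|Du|^p$. Without the pointwise gradient bound of \Cref{magic theorem}, this term would only be known to converge to a nonnegative measure carried by $\partial\{u>0\}$, that is, a possible singular part of $\Delta_pu$ on the free boundary. The estimate $|Du|^p\lesssim u^\gamma$ is exactly what converts the factor $1/\varepsilon$ into the integrable weight $u^{\gamma-1}$ and so rules out such a singular part. The only other point to check is that the constant in \Cref{magic theorem} is uniform on $\operatorname{supp}\varphi$ after the rescaling and covering.
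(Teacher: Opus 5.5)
Your proof is correct in the paper's setting, but it does not follow the paper's route, because the paper gives no argument for this proposition. It imports the statement, for all $p>1$, from the literature, with Phillips' result as the linear case. You instead derive it from the paper's other preliminaries, in two steps.

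First, you test \eqref{EL} with $\varphi\,\psi_\varepsilon(u)$. The sign of the boundary-layer term $\int\varphi\,\psi_\varepsilon'(u)|Du|^p$, together with Fatou's lemma, gives $u^{\gamma-1}\chi_{\{u>0\}}\in L^1_{\loc}(\Omega)$.

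Second, you kill that boundary-layer term. You obtain $|Du|^p\le Cu^\gamma$ on $\operatorname{supp}\varphi$ from \Cref{magic theorem} via the rescaling $u(x_0+rx)/r^{\alpha}$, $\alpha=p/(p-\gamma)$. This rescaling leaves $J$ invariant because $(\alpha-1)p=\alpha\gamma$. Combined with $\varepsilon^{-1}u^\gamma\le 2u^{\gamma-1}$ on $\{\varepsilon\le u\le 2\varepsilon\}$, dominated convergence makes the term vanish, with dominating function $\varphi\,u^{\gamma-1}\chi_{\{u>0\}}$.

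The remaining details all check out: admissibility of the test function, $Du=0$ on $\{u=0\}$, and the pointwise vanishing of $\chi_{\{\varepsilon\le u\le 2\varepsilon\}}$. The argument is close in spirit to the paper's proof of \Cref{integrability lemma}: a cut-off in $u$ vanishing near the free boundary, a sign on the layer term, and the pointwise gradient bound to control fluxes. You also identify correctly why \Cref{magic theorem} is needed: without it, the layer term only converges to a nonnegative measure on $\partial\{u>0\}$, i.e.\ a possible singular part of $\Delta_p u$.

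What your route buys is a short derivation that is self-contained within the paper. What it costs is generality. Both \eqref{local regularity} and \Cref{magic theorem} are available here only under the standing assumption $p\ge 2$. So you have proved the proposition for $p\ge2$, not for the full range $1<p<2$ included in the statement. In that range you would need a $C^1$ estimate and a substitute for the pointwise bound $|Du|^p\lesssim u^\gamma$, or some other way of excluding mass on the free boundary.

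This limitation is harmless for the paper, which uses the proposition only in \Cref{Hausdorff local}, where $p\ge2$. There is also no circularity inside the paper, since it quotes those two inputs as established results. You should still check that their original proofs do not themselves rely on the distributional equation.
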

We close this section by recalling \cite[Proposition 5.1]{ATU25}, revealing that in a universally flat regime, the function $u^{\frac{p-\gamma}{p}}$ is $p$-subharmonic.
\begin{proposition}\label{subsol}
If $u$ is a local minimizer of \eqref{mimization functional} in $B_1$, then there exists $\varepsilon_\star>0$, depending only on $p$, $\gamma$ and $n$, such that 
$$
\Delta_p\left(u^{\frac{p-\gamma}{p}}\right)\ge0 \quad \mbox{in} \quad \left\{0<u\le\varepsilon_\star^{\frac{p}{p-\gamma}}\right\} \cap B_{\frac{1}{2}}.
$$
\end{proposition}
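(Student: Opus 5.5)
The plan is a direct computation for $w:=u^{\theta}$ with $\theta:=\frac{p-\gamma}{p}$, reducing the claim to a pointwise gradient bound with a small, specific constant. In $\{u>0\}$ we have $Dw=\theta u^{\theta-1}Du$. Hence
$$
|Dw|^{p-2}Dw=\theta^{p-1}u^{(\theta-1)(p-1)}|Du|^{p-2}Du .
$$
Taking the divergence and using \eqref{EL}, at least formally, gives
$$
\Delta_p w=\theta^{p-1}u^{(\theta-1)(p-1)-1}\Big[\gamma u^{\gamma}-(1-\theta)(p-1)|Du|^p\Big].
$$
Since $1-\theta=\gamma/p$, the bracket is nonnegative exactly when
$$
|Du(x)|^p\le \frac{p}{p-1}\,u^{\gamma}(x).
$$
The computation must be justified weakly, because $|Du|$ may vanish and \eqref{EL} is only weak. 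I would multiply the weak form of \eqref{EL} by test functions $\theta^{p-1}u^{(\theta-1)(p-1)}\varphi$ with $\varphi\ge0$ supported in $\{u>0\}$. This is legitimate since $u$ is $C^{1,\beta}$ and bounded below on $\operatorname{supp}\varphi$, and integrating by parts reproduces the bracket in integrated form. So the statement reduces to showing that this gradient bound holds wherever $0<u\le\varepsilon_\star^{p/(p-\gamma)}$ in $B_{1/2}$.

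\Cref{magic theorem} gives the bound with constant $C\|u\|^{p-\gamma}_{L^\infty}$, which is not small a priori. I would therefore localize by scaling. The functional is invariant under $v(y)=u(x+\rho y)\rho^{-p/(p-\gamma)}$, so $v$ is again a local minimizer in $B_1$. Applying \Cref{magic theorem} to $v$ at $y=0$ gives
$$
|Du(x)|^p\le C\Big(\sup_{B_\rho(x)}u\;\rho^{-\frac{p}{p-\gamma}}\Big)^{p-\gamma}u^{\gamma}(x).
$$
The goal is then a radius $\rho$ with $\sup_{B_\rho(x)}u\le \delta\rho^{p/(p-\gamma)}$, where $C\delta^{p-\gamma}\le \frac{p}{p-1}$. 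The natural choice is $\rho$ tied to $u(x)$, namely $\rho=(u(x)/\kappa)^{\frac{p-\gamma}{p}}$, so that $v(0)=\kappa$. One then needs a Harnack-type control $\sup_{B_1}v\le \delta$ whenever $v(0)=\kappa$ is small. The smallness $u(x)\le\varepsilon_\star^{p/(p-\gamma)}$ is used precisely to make this $\rho$ admissible, so that $B_\rho(x)\subset B_{3/4}$. The universal bounds \eqref{local regularity}, \eqref{growth} and \eqref{nondegeneracy} are what keep all constants depending only on $p,\gamma,n$.

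The main obstacle is this Harnack-type step. \Cref{magic theorem} applied to $v$ shows that $v^\theta$ is Lipschitz with constant $\lesssim(\sup_{B_1}v)^\theta$. Used naively, this does not close: it bounds $\sup v$ in terms of itself. If a direct iteration over dyadic radii fails, I would argue by contradiction and compactness. Suppose there are minimizers $u_k$ and points $x_k$ with $u_k(x_k)\to0$ but $|Du_k(x_k)|^p>\frac{p}{p-1}u_k^\gamma(x_k)$. I would rescale at the scale $u_k(x_k)^{\theta}$ and use \eqref{growth}, \eqref{local regularity} and \Cref{magic theorem} for uniform $C^{1,\beta}$ bounds. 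Passing to the limit gives a global minimizer $v_\infty$ that violates the sharp bound at the origin. The contradiction would then come from a Liouville-type classification of global minimizers with this growth, namely the one-dimensional profiles $c\,(x\cdot e)_+^{p/(p-\gamma)}$. For these, $|Dv|^p/v^\gamma$ is a constant that one checks equals the critical value $\frac{p}{p-1}$, so they are exactly extremal. Establishing this classification, or replacing it by a more hands-on argument, is the step I expect to be hardest.
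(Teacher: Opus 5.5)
Your reduction is correct and exact. In $\{u>0\}$ the sign of $\Delta_p(u^{\frac{p-\gamma}{p}})$ is the sign of $\frac{p}{p-1}u^\gamma-|Du|^p$; the same identity appears in Step 1 of the proof of \Cref{Hausdorff reduced}. So the statement is \emph{equivalent} to the sharp bound $|Du|^p\le\frac{p}{p-1}u^\gamma$ on $\{0<u\le\varepsilon_\star^{p/(p-\gamma)}\}\cap B_{1/2}$. The paper only quotes the proposition from [ATU25], so there is no in-paper proof to compare with.

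The gap is that neither of your routes can deliver this constant, because the constant is attained: the half-space profile has $|Du|^p/u^\gamma\equiv\frac{p}{p-1}$.

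\emph{Scaling route.} If your Harnack-type step held for every small $\delta$, you would get $|Du|^p\le Ku^\gamma$ with $K<\frac{p}{p-1}$, which the half-space profile violates. The obstruction is \eqref{nondegeneracy}. As soon as $B_\rho(x)$ meets $\partial\{u>0\}$ (as it does for the half-space profile once $\kappa$ is small), it bounds $\sup_{B_1}v$ from below by a universal constant. Meanwhile the admissible $\delta$ is dictated by the inexplicit constant of \Cref{magic theorem}, so the two cannot be reconciled.

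\emph{Compactness route.} The strict inequality $|Du_k(x_k)|^p>\frac{p}{p-1}u_k^\gamma(x_k)$ survives the limit only as $|Dv_\infty(0)|^p\ge\frac{p}{p-1}v_\infty^\gamma(0)$. The one-dimensional profiles your Liouville theorem would produce satisfy this with equality, so no contradiction arises even if the classification were available. In addition, rescaling at scale $u_k(x_k)^{(p-\gamma)/p}$ need not keep the free boundary at bounded distance, so the limit may have none.

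More seriously, minimality alone does not seem to give the sharp bound at every free boundary point, so the missing step is not merely technical. For radial solutions of \eqref{EL}, multiplying the equation by $u'$ gives
\[
\frac{d}{dr}\Big(\frac{p-1}{p}|u'|^p-u^\gamma\Big)=-\frac{n-1}{r}|u'|^p .
\]
Suppose $\{u>0\}$ coincides with $\{r<R\}$ near $r=R$. Then $u(R)=u'(R)=0$ forces $\frac{p-1}{p}|u'|^p-u^\gamma>0$ for $r<R$ close to $R$. That is, $\Delta_p(u^{\frac{p-\gamma}{p}})<0$ at points where $u$ is as small as you like.

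The same mechanism works without symmetry. With $P:=\frac{p-1}{p}|Du|^p-u^\gamma$ and $e:=Du/|Du|$, one has $\partial_eP=-|Du|^p\,\mathrm{div}\,e$. Hence $P>0$ near the free boundary wherever the superlevel sets of $u$ are mean-convex. This is the geometry one expects, for instance, at the outermost free boundary point of a minimizer in an annulus with positive data on the inner sphere and zero data on the outer one.

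A proof would therefore need an ingredient excluding this geometry, or a weaker conclusion such as $\Delta_p(u^{\frac{p-\gamma}{p}})\ge -C$ near the free boundary; that lower bound is all the proof of \Cref{integrability lemma} actually needs. As it stands, your proposal does not establish the statement.
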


\section{An integrability property}\label{s3}
In this section, we prove integrability of the $p$-Laplacian of a nonlinear transform of minimizers, which is used later to obtain our main result. Using \Cref{magic theorem}, we extend the corresponding result from \cite[Corollary 2.7]{Ph83} for $p=2$ to the nonlinear setting with $p\ge2$.
\begin{lemma}\label{integrability lemma}
    If $u$ is a minimizer of \eqref{mimization functional}, and $x_0\in\partial\{u>0\}$, then
    $$
    \Delta_p(u^{\frac{p-\gamma}{p}})\in L^1\left(\{u>0\}\cap B_r(x_0)\right),
    $$
    for $r>0$ small.
\end{lemma}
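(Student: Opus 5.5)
Set $w:=u^{\frac{p-\gamma}{p}}$ and $\theta:=\frac{p-\gamma}{p}\in(0,1)$. The plan is to compute $\Delta_p w$ pointwise in $\{u>0\}$, where $u$ is smooth enough: $u$ is $C^{1,\beta}$ by \eqref{local regularity}, and the equation \eqref{EL} is non-degenerate wherever $Du\neq0$. On the set $\{Du=0\}$ inside $\{u>0\}$ the computation is interpreted distributionally. We then show that every term is bounded by a multiple of $u^{\gamma-1}$ plus a nonnegative term, and integrate using \Cref{equation in the sense of distributions}.

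Formally, $Dw=\theta u^{\theta-1}Du$, so $|Dw|^{p-2}Dw=\theta^{p-1}u^{(\theta-1)(p-1)}|Du|^{p-2}Du$. Hence
$$
\Delta_p w=\theta^{p-1}\Big(u^{(\theta-1)(p-1)}\Delta_p u+(\theta-1)(p-1)u^{(\theta-1)(p-1)-1}|Du|^p\Big).
$$
Since $(\theta-1)(p-1)=-\frac{\gamma(p-1)}{p}$, the first term equals $\gamma\theta^{p-1}u^{\gamma-1-\frac{\gamma(p-1)}{p}}=\gamma\theta^{p-1}u^{\frac{\gamma}{p}-1}$. By \Cref{magic theorem}, applied after rescaling and translating to a ball around $x_0$, we have $|Du|^p\le Cu^\gamma$. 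The second term is therefore bounded in absolute value by $Cu^{-\frac{\gamma(p-1)}{p}-1+\gamma}=Cu^{\frac{\gamma}{p}-1}$. So $|\Delta_p w|\le Cu^{\frac{\gamma}{p}-1}$ in $\{u>0\}\cap B_r(x_0)$, and the task reduces to proving $u^{\frac{\gamma}{p}-1}\in L^1(\{u>0\}\cap B_r(x_0))$.

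\textbf{Main step: integrability of $u^{\frac{\gamma}{p}-1}$.} Since $\frac{\gamma}{p}-1\ge\gamma-1$ and $u$ is bounded, we have $u^{\frac{\gamma}{p}-1}\le C u^{\gamma-1}$. It thus suffices to show $u^{\gamma-1}\chi_{\{u>0\}}\in L^1_{\loc}$. By \Cref{equation in the sense of distributions}, $\gamma u^{\gamma-1}\chi_{\{u>0\}}=\Delta_p u$ is a nonnegative distribution, hence a Radon measure. Test it with a cutoff $\varphi\in C_c^\infty(B_{2r}(x_0))$ satisfying $0\le\varphi\le1$ and $\varphi=1$ on $B_r(x_0)$. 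This gives
$$
\int \gamma u^{\gamma-1}\chi_{\{u>0\}}\varphi\,dx=-\int|Du|^{p-2}Du\cdot D\varphi\,dx\le C\|Du\|_{L^\infty}^{p-1}r^{n-1}<\infty.
$$
The left-hand side is a genuine Lebesgue integral of a nonnegative function, because the distributional identity holds with that function as the density. Monotone convergence (truncating $u^{\gamma-1}$ as $\min\{u^{\gamma-1},k\}$) makes this rigorous if needed.

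The expected obstacle is justifying the pointwise formula for $\Delta_p w$ near the critical set $\{Du=0\}\cap\{u>0\}$, where $u$ may fail to be $C^2$. I would handle this by verifying the identity in the weak sense: for a test function $\psi$ supported in $\{u>0\}$, write $|Dw|^{p-2}Dw\cdot D\psi$ and substitute $\psi\,\theta^{p-1}u^{(\theta-1)(p-1)}$ as a test function in \eqref{EL}. This substitution is legitimate because $u$ is bounded below on $\operatorname{supp}\psi$. The resulting identity shows that $\Delta_p w$ is the $L^1_{\loc}$ function above, and the bound then yields the lemma.
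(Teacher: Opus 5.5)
\textbf{There is a genuine gap: the reduction to $u^{\frac{\gamma}{p}-1}\in L^1$ fails, and it cannot be repaired.}

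Your exponent comparison is reversed. Since $p\ge2$ and $\gamma>0$, we have $\frac{\gamma}{p}<\gamma$. So $\frac{\gamma}{p}-1<\gamma-1$, and $u^{\frac{\gamma}{p}-1}\ge u^{\gamma-1}$ wherever $u\le1$. The inequality $u^{\frac{\gamma}{p}-1}\le Cu^{\gamma-1}$ therefore fails near the free boundary. The local integrability of $u^{\gamma-1}\chi_{\{u>0\}}$, which you correctly derive from \Cref{equation in the sense of distributions}, gives you nothing.

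Worse, the target itself is false in general. With $\alpha=\frac{p}{p-\gamma}$ you have $u^{\frac{\gamma}{p}-1}=u^{-1/\alpha}$. By \eqref{growth}, $u(x)\le C\dist(x,\partial\{u>0\})^{\alpha}$ locally, so $u^{-1/\alpha}\ge c\,\dist(x,\partial\{u>0\})^{-1}$. Take the one-dimensional profile $u=c(x_n^+)^\alpha$, which is the blow-up limit in Step 3 of the proof of \Cref{Hausdorff reduced}. There $u^{-1/\alpha}$ is a multiple of $x_n^{-1}$, which is not integrable on $\{x_n>0\}\cap B_r$, whereas $\Delta_p(u^{1/\alpha})\equiv0$. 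Estimating the two terms of your formula separately discards exactly the cancellation between them, namely the factor $\frac{p}{p-1}-\frac{|Du|^p}{u^\gamma}$, and the lemma depends on that cancellation.

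\textbf{What is missing} is the sign of $\Delta_pw$ near the free boundary together with its divergence structure.

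First, use \Cref{magic theorem} to bound the flux rather than $\Delta_pw$ itself: $|Dw|^{p-1}=\alpha^{1-p}(u^{-\gamma/p}|Du|)^{p-1}\le C$. Second, use \Cref{subsol} to get $\Delta_pw\ge0$ in $\{0<u\le\varepsilon_\star^{\alpha}\}$.

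Then test with $\eta\,\psi(w/\varepsilon)$. Here $\eta\in C_c^\infty(B_{2r}(x_0))$ equals $1$ on $B_r(x_0)$, and $\psi$ is smooth and nondecreasing with $\psi=0$ on $(-\infty,\frac12]$ and $\psi=1$ on $[1,\infty)$. This test function has compact support in $\{u>0\}$, so your weak formula for $\Delta_pw$ applies, and
\[
\int\eta\,\psi(w/\varepsilon)\,\Delta_pw\,dx=-\int\psi(w/\varepsilon)|Dw|^{p-2}Dw\cdot D\eta\,dx-\frac{1}{\varepsilon}\int\eta\,\psi'(w/\varepsilon)|Dw|^p\,dx\le\int|Dw|^{p-1}|D\eta|\,dx\le C,
\]
uniformly in $\varepsilon$.

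To conclude, split the domain. On $\{u>\varepsilon_\star^\alpha\}$, your pointwise bound gives $|\Delta_pw|\le C\varepsilon_\star^{-1}$, so that piece is harmless. On $\{0<u\le\varepsilon_\star^\alpha\}$, the integrand is nonnegative and increases as $\varepsilon\downarrow0$. Monotone convergence then yields $\Delta_pw\in L^1(\{u>0\}\cap B_r(x_0))$. This is precisely the paper's argument, which uses the test function $\varphi_\delta(\min\{w,\varepsilon\})$.
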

\begin{proof}
    Let $\varphi\in C^\infty(\R)$ be such that $\varphi'\ge0$, and
    \begin{equation*}
    \varphi(t):=
    \begin{cases}
    0 \quad \textrm{if} \quad t\le\frac{1}{2},\\
    t \quad \textrm{if} \quad t\ge 1.
    \end{cases}
    \end{equation*}
    For each $0<\delta<\varepsilon$, set
    \begin{equation*}
    \varphi_\delta(t):=\delta\varphi\left(\frac{t}{\delta}\right)\,\,\,\text{and}\,\,\,u_\varepsilon:=\min\left\{ u^\frac{1}{\alpha},\varepsilon\right\},
    \end{equation*}
    where $\alpha:=\frac{p}{p-\gamma}$. Since $\varphi_\delta'\ge0$ and
    \begin{equation*}
        D\varphi_\delta(u_\varepsilon)=0\,\,\,\text{on}\,\,\,\{u\ge\varepsilon^\alpha\},
    \end{equation*}
    then
    \begin{align}\label{I is positive}
        I&:=\int_{B_r(x_0)}|D(u^{\frac{1}{\alpha}})|^{p-2}D(u^{\frac{1}{\alpha}})\cdot D\varphi_\delta(u_\varepsilon)\,dx\nonumber\\
        &=\int_{\{u<\varepsilon^\alpha\}\cap B_r(x_0)}|D(u^{\frac{1}{\alpha}})|^p\varphi'_\delta(u_\varepsilon)\,dx\ge0
    \end{align} 
    On the other hand, taking $\delta=\frac{\varepsilon}{2}$ and integrating by parts, we get
    \begin{align}\label{integration by parts}
        I&=\int_{\partial B_r(x_0)}\varphi_\delta(u_\varepsilon)|D(u^{\frac{1}{\alpha}})|^{p-2}D(u^{\frac{1}{\alpha}})\cdot\nu\,dx\nonumber\\
        &\quad-\int_{B_r(x_0)}\varphi_\delta(u_\varepsilon)\Delta_p(u^{\frac{1}{\alpha}})\,dx:=I_1-I_2.
    \end{align}
    Since 
    $$
     D(u^{\frac{1}{\alpha}})=\frac{1}{\alpha}u^{-\frac{\gamma}{p}}Du,
    $$
    then employing \Cref{magic theorem}, we obtain
    $$
    |D(u^{\frac{1}{\alpha}})|^{p-2}D(u^{\frac{1}{\alpha}})\cdot\nu\le\frac{1}{\alpha^{p-1}}\left[u^{-\frac{\gamma}{p}}|Du|\right]^{p-1}\le\frac{C}{\alpha^{p-1}}\|u\|^{p-\gamma}_{L^\infty(B_1)}.
    $$
    Hence, the integrand in $I_1$ is bounded and therefore
    \begin{equation}\label{I1 is bounded}
        |I_1|\le C,
    \end{equation}
    where $C>0$ is independent of $\varepsilon>0$.
    Combining \eqref{I is positive}-\eqref{I1 is bounded}, we get
    \begin{equation}\label{I_2 is bounded}
        I_2\le I_1\le C.
    \end{equation}
    Observe now that
    \begin{equation}\label{decomposing I2}
        I_2=\int_{\{u\le\varepsilon^\alpha\}\cap B_r(x_0)}\varphi_\delta(u)\Delta_p(u^{\frac{1}{\alpha}})\,dx+\int_{\{u>\varepsilon^\alpha\}\cap B_r(x_0)}\Delta_p(u^{\frac{1}{\alpha}})\,dx.
    \end{equation}
    For $\varepsilon>0$ small enough, according to \Cref{subsol},
    $$
    \Delta_p(u^{\frac{1}{\alpha}})\ge0\,\,\,\text{in}\,\,\, \{0<u\le\varepsilon^\alpha\},
    $$
    hence, the first term in the right-hand side of \eqref{decomposing I2} is nonnegative, which combined with \eqref{I_2 is bounded}, yields
    $$
    \int_{\{u>\varepsilon^\alpha\}\cap B_r(x_0)}\Delta_p(u^{\frac{1}{\alpha}})\,dx\le I_2\le C.
    $$
    Letting $\varepsilon\to0^+$ in the last inequality, we obtain the desired result.
\end{proof}
\section{Negiligibility of a certain zero set}\label{s4}
In this section, we show that part of the free boundary away from the reduced boundary is negligible in a certain Hausdorff dimension. We conclude it from the following result, which extends the corresponding statement for $p=2$ to any $p\ge2$. Its proof follows the ideas of \cite[Lemma 3.1]{Ph83}; the additional terms arising from the nonlinearity are controlled using the pointwise gradient estimate from \Cref{magic theorem}. We provide the details below.
\begin{lemma}\label{Hausdorff local}
    If $u$ is a minimizer of \eqref{mimization functional}, $B_{3r}\subset\Omega$ and $|\{u=0\}\cap B_{3r}|=0$, then
    \begin{equation}\label{step1}
        \mathcal{H}^{n-\frac{2-\gamma}{p-\gamma}p}\left(\{u=0\}\cap B_{r}\right)=0.
    \end{equation}
\end{lemma}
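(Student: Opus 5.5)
The approach is a Phillips-type measure argument: the distributional equation from \Cref{equation in the sense of distributions} gives a Radon measure $\mu:=\Delta_p u$, whose density $\gamma u^{\gamma-1}\chi_{\{u>0\}}$ must be large near points of $\{u=0\}$. Set $s:=n-\frac{2-\gamma}{p-\gamma}p$ and $\alpha:=\frac{p}{p-\gamma}$. Since $|\{u=0\}\cap B_{3r}|=0$, \Cref{equation in the sense of distributions} gives $\Delta_p u=\gamma u^{\gamma-1}$ a.e. in $B_{3r}$ as a distribution. Testing with a cutoff $\eta\in C_c^\infty(B_{2r})$, $\eta\equiv1$ on $B_{3r/2}$, and using $|Du|\le C$ from \eqref{local regularity}, we get
$$\int_{B_{3r/2}}u^{\gamma-1}\,dx\le C.$$
Hence $u^{\gamma-1}\in L^1(B_{3r/2})$, so $\mu:=u^{\gamma-1}\,dx$ is a finite, absolutely continuous measure on $B_{3r/2}$.

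Next comes the lower density bound at zeros. For $y\in\{u=0\}\cap B_r$ and small $\rho$, \eqref{growth} (valid at $y$, since $y\in\partial\{u>0\}$ when $\{u=0\}$ has empty interior) gives $u\le C\rho^{\alpha}$ in $B_\rho(y)$. Therefore $u^{\gamma-1}\ge c\rho^{\alpha(\gamma-1)}$ there, and
$$\mu(B_\rho(y))\ge c\rho^{n-\alpha(1-\gamma)}=c\rho^{\,n-\frac{p(1-\gamma)}{p-\gamma}}.$$
This exponent is bigger than $s$, so it is not yet enough. The bound should be upgraded using the $p$-Laplacian structure, either via the flux identity $\mu(B_\rho(y))\approx\int_{\partial B_\rho}|Du|^{p-2}Du\cdot\nu$ or via the transform $w=u^{1/\alpha}$. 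The target is the density estimate
$$\liminf_{\rho\to0}\frac{\mu(B_\rho(y))}{\rho^{s}}=+\infty \quad\text{for every } y\in\{u=0\}\cap B_r,$$
or at least that the ratio is bounded below by a constant for every $\rho$ small. The exponent $s=n-p+\frac{p(p-2)\cdots}{}$ has gap $n-s=\frac{(2-\gamma)p}{p-\gamma}=p-\alpha(p-2)$. This suggests computing $\Delta_p u\sim|Du|^{p-2}u/\rho^2$-scale quantities, with $|Du|^{p-2}\le Cu^{\gamma(p-2)/p}$ from \Cref{magic theorem}.

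Concretely, I would compute on $B_\rho(y)$ with $\rho$-dependent test functions. Testing against $\psi=(\rho^\alpha-u)_+$-type functions, or $\eta_\rho^2$ with $|D\eta_\rho|\le C/\rho$, the nonlinear error term $\int|Du|^{p-1}|D\eta_\rho|$ is controlled by \Cref{magic theorem} as $\rho^{n-1}\rho^{(\alpha\gamma/p)(p-1)}$. This yields a two-sided comparison $\mu(B_\rho(y))\gtrsim\rho^{-2}\int_{B_\rho}u^{\gamma}$-type bounds. Non-degeneracy \eqref{nondegeneracy} and the $C^{1,\beta}$ bound then give $u\ge c\rho^\alpha$ on a subball of radius comparable to $\rho$, which supplies a quantitative lower bound.

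Given the density estimate, the standard covering argument finishes. Since $\mu$ is absolutely continuous and $|\{u=0\}|=0$, for any $\epsilon>0$ there is an open $U\supset\{u=0\}\cap B_r$ with $\mu(U)<\epsilon$. Vitali covering of $\{u=0\}\cap B_r$ by balls $B_{\rho_i}(y_i)\subset U$ with $\mu(B_{\rho_i})\ge M\rho_i^{s}$ gives $\mathcal{H}^{s}_\delta\le C\sum(5\rho_i)^s\le CM^{-1}\epsilon$. This yields \eqref{step1}; even a bounded lower density suffices, because $\mu(U)$ is arbitrarily small.

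The main obstacle is the sharp density bound $\mu(B_\rho(y))\gtrsim\rho^{s}$, i.e. extracting the extra power $\rho^{-\alpha(p-2)}$ beyond the naive bound. This is where the gradient estimate of \Cref{magic theorem} must control the degenerate factor $|Du|^{p-2}$ in the flux. I would first try integrating $\Delta_p u$ against the cutoff and bounding the flux from below using non-degeneracy together with the Caccioppoli-type control of \Cref{integrability lemma}.
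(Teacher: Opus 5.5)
Your proposal has a genuine gap, and it sits at the step you flag as the main obstacle. With $\mu=u^{\gamma-1}\,dx$, the density bound $\mu(B_\rho(y))\gtrsim\rho^{s}$ is not merely hard to prove; it is false. Test \Cref{equation in the sense of distributions} with $\eta\in C_c^\infty(B_{2\rho}(y))$, $\eta\equiv1$ on $B_\rho(y)$, $|D\eta|\le C/\rho$. By \Cref{magic theorem} and \eqref{growth}, $|Du|\le Cu^{\gamma/p}\le C\rho^{\alpha\gamma/p}=C\rho^{\alpha-1}$ on $B_{2\rho}(y)$, hence
\[
\gamma\,\mu(B_\rho(y))\le\int_{B_{2\rho}(y)}|Du|^{p-1}|D\eta|\,dx\le C\rho^{\,n-1+(\alpha-1)(p-1)}=C\rho^{\,n-\alpha(1-\gamma)}.
\]
So your naive lower bound is already sharp, and $\mu(B_\rho(y))/\rho^{s}\le C\rho^{\alpha}\to0$ at every $y\in\{u=0\}\cap B_r$.

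No flux identity or choice of test function can supply the missing factor, which is $\rho^{-\alpha}$, not $\rho^{-\alpha(p-2)}$. Your sketched lower bound by $\rho^{-2}\int_{B_\rho}u^\gamma$ is also incompatible with this upper bound: non-degeneracy gives $\rho^{-2}\int_{B_\rho}u^\gamma\ge c\rho^{\,n-2+\alpha\gamma}$, and $\alpha<2$. With $u^{\gamma-1}$ alone, the density argument yields only $\mathcal{H}^{n-\alpha(1-\gamma)}(\{u=0\}\cap B_r)=0$. Since $n-\alpha(1-\gamma)>n-1$, this is too weak for its use in \Cref{Hausdorff reduced}.

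The missing idea is to work with the more singular quantity $u^{\gamma-2}$. Its local integrability comes from the \emph{second} variation (stability) of the minimizer, not from the Euler--Lagrange equation. Take $\xi\in C_0^\infty(B_{3r})$, $\xi\ge0$, $\xi\equiv1$ on $B_{2r}$. Then $u+\varepsilon\xi\in\mathbb{K}$ for $\varepsilon\ge0$, so $I(\varepsilon):=J(u+\varepsilon\xi)$ is minimal at $\varepsilon=0$. The hypothesis $|\{u=0\}\cap B_{3r}|=0$, together with \Cref{equation in the sense of distributions}, is exactly what gives $I'(0)=0$; otherwise the term $\varepsilon^{\gamma-1}\int_{\{u=0\}}\xi^\gamma$ makes the right derivative infinite. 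Consequently $I''(\varepsilon_k)\ge0$ along some $\varepsilon_k\downarrow0$, that is,
\[
\gamma(1-\gamma)\int_{B_{2r}}(u+\varepsilon_k)^{\gamma-2}\,dx\le(p-1)\int_{B_{3r}}|D(u+\varepsilon_k\xi)|^{p-2}|D\xi|^2\,dx\le C,
\]
and monotone convergence gives $u^{\gamma-2}\in L^1(B_{2r})$.

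Now your naive computation, applied to $u^{\gamma-2}$, gives $\int_{B_\rho(y)}u^{\gamma-2}\,dx\ge c\rho^{\,n-\alpha(2-\gamma)}=c\rho^{s}$ directly from \eqref{growth}. Your covering argument (equivalently \eqref{Federer69}) then finishes the proof. A warning sign in your outline is that the hypothesis $|\{u=0\}\cap B_{3r}|=0$ is barely used, even though $u^{\gamma-1}\chi_{\{u>0\}}\in L^1_{\loc}$ holds for every minimizer.
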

\begin{proof} We divide the proof into two steps.

    \textsc{Step 1.} First, we show that
    \begin{equation}\label{new integrability}
        u^{\gamma-2}\in L^1(B_{2r}).
    \end{equation}
    Let $\xi\in C_0^\infty(B_{3r})$, $\xi\ge0$ and $\xi\equiv1$ on $B_{2r}$. Then for any $\varepsilon\ge0$, the function $u+\varepsilon\xi\in\mathbb{K}$. Setting 
    $$
    I(\varepsilon):=J(u+\varepsilon\xi),
    $$
    using the Dominated Convergence Theorem and recalling \Cref{equation in the sense of distributions}, one can easily check that 
    \begin{equation}\label{first variation}
        I'(\varepsilon)=\int_\Omega(|D(u+\varepsilon\xi)|^{p-2}D(u+\varepsilon\xi)\cdot D\xi+\gamma(u+\varepsilon\xi)^{\gamma-1}\xi)\,dx
    \end{equation}
    and
    \begin{align}\label{second variation}
        I''(\varepsilon)&=\int_{B_{3r}}|D(u+\varepsilon\xi)|^{p-2}|D\xi|^2\,dx\nonumber\\
        &\quad+\int_{B_{3r}}(p-2)|D(u+\varepsilon\xi)|^{p-4}(D(u+\varepsilon\xi)\cdot D\xi)^2\,dx\nonumber\\
        &\quad+\int_{B_{3r}}\gamma(\gamma-1)(u+\varepsilon\xi)^{\gamma-2}\xi^2\,dx.
    \end{align}
    Thus, $I(\varepsilon)$ attains its minimum at $\varepsilon=0$, and thanks to \eqref{first variation} and \Cref{equation in the sense of distributions}, one has
    $$
    I'(0)=\int_\Omega(|Du|^{p-2}Du\cdot D\xi+\gamma u^{\gamma-1}\xi)\,dx=0.
    $$
    Also, by the Monotone Convergence Theorem, $I'$ is continuous on $[0,1)$ and differentiable on $(0,1)$. Hence, there must exist $\varepsilon_k\to0$ such that $I''(\varepsilon_k)\ge0$. Thus, recalling the definition of $\xi$, employing \eqref{second variation} and \Cref{magic theorem}, we estimate
    \begin{equation*}
        \gamma(1-\gamma)\int_{B_{2r}}(u+\varepsilon_k)^{\gamma-2}\,dx\le\gamma(1-\gamma)\int_{B_{3r}}(u+\varepsilon_k\xi)^{\gamma-2}\xi^2\,dx\le C,
    \end{equation*}
    where $C>0$ is a constant depending only on $\|u\|_{L^\infty(B_1)}$, $\|D\xi\|_{L^\infty(B_1)}$, $p$, $\gamma$ and $n$. Using the Monotone Convergence Theorem one more time, we obtain $u^{\gamma-2}\in L^1(B_{2r})$.
    \vspace{0.2cm}

    \textsc{Step 2.} Next, we show that \eqref{new integrability} implies \eqref{step1}. 
    
    Recall that if $g\in L^1_{\loc}(B_{2r})$, and $s>0$, then (see, for example, \cite[2.10.19 (4)]{F69} or \cite[Theorem 5.8, see also Section 6.2]{M12})
    \begin{equation}\label{Federer69}
        \lim_{\rho\to0}\rho^{s-n}\int_{B_\rho(x_0)}g\,dx=0\,\,\,\text{for}\,\,\,\mathcal{H}^{n-s}\,\,\,\text{a.e.}\,\,\,x_0\in B_{2r}.
    \end{equation}
    If $x_0\in\partial\{u>0\}\cap B_r$, then by \eqref{growth}, one has $u<Cr^\alpha$, where $\alpha:=\frac{p}{p-\gamma}$ and $C>0$ depends only on $p$, $\gamma$ and $n$. Hence, for $\rho<\frac{r}{3}$, one has
    $$
    \int_{B_\rho(x_0)}u^{\gamma-2}\,dx>C\rho^{n-\alpha(2-\gamma)},
    $$
    that is
    $$
    \rho^{\alpha(2-\gamma)-n}\int_{B_\rho(x_0)}u^{\gamma-2}\,dx>C.
    $$
    Consequently, \eqref{Federer69} gives $\mathcal{H}^{n-\alpha(2-\gamma)}(\partial\{u>0\}\cap B_r)=0$. To finish the proof, observe that $\partial\{u>0\}\cap B_r=\{u=0\}\cap B_r$ since $|\{u=0\}\cap B_r|=0$. 
\end{proof}
\begin{corollary}\label{n-2 Hausdorff}
    If $u$ is a minimizer of \eqref{mimization functional}, then 
    $$
    \mathcal{H}^{n-\frac{2-\gamma}{p-\gamma}p}\left(\partial\{u>0\}\setminus\overline{\partial_{\reduced}\{u>0\}}\right)=0.
    $$
\end{corollary}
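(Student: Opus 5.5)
The plan is to reduce the corollary to \Cref{Hausdorff local} by a covering argument. Set $K:=\overline{\partial_{\reduced}\{u>0\}}$, which is closed, and $\Sigma:=\partial\{u>0\}\setminus K$. For every $x\in\Sigma$ there is a ball $B_{3\rho}(x)\subset\Omega$ with $B_{3\rho}(x)\cap K=\emptyset$. Since $\Sigma$ is covered by countably many such balls (take rational centers and radii, or use Lindel\"of), and $\mathcal{H}^{s}$ is countably subadditive, it suffices to prove
$$
\mathcal{H}^{n-\frac{2-\gamma}{p-\gamma}p}\left(\partial\{u>0\}\cap B_\rho(x)\right)=0
$$
for each such ball. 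By \Cref{Hausdorff local}, applied after translating $x$ to the origin, this follows once we know that $|\{u=0\}\cap B_{3\rho}(x)|=0$. Here one uses that on $\{u>0\}\cap B_\rho$ the set $\{u=0\}$ is empty, while $\{u=0\}\cap B_\rho\subset\partial\{u>0\}$ once $\{u=0\}$ has no interior there.

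The key step is to show that $\{u=0\}$ has Lebesgue measure zero in a ball $B:=B_{3\rho}(x)$ that does not meet the reduced boundary.
\begin{enumerate}
\item Since $\{u>0\}$ has locally finite perimeter in $\Omega$ by \cite[Remark 6.1]{ATU25}, De Giorgi's structure theorem gives $|D\chi_{\{u>0\}}|=\H\llcorner\partial_{\reduced}\{u>0\}$.
\item Hence $D\chi_{\{u>0\}}=0$ as a distribution in $B$.
\item Because $B$ is connected, $\chi_{\{u>0\}}$ is a.e. constant in $B$, so either $|\{u>0\}\cap B|=0$ or $|\{u=0\}\cap B|=0$.
\end{enumerate}
The first alternative is impossible. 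Indeed $x\in\partial\{u>0\}$, and the positive density estimate \eqref{positive density} at $x$ gives $|\{u>0\}\cap B_s(x)|\ge\tau|B_s|>0$ for small $s$. (One could also argue through continuity and nondegeneracy \eqref{nondegeneracy}.) Therefore $|\{u=0\}\cap B_{3\rho}(x)|=0$, and \Cref{Hausdorff local} applies.

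The main obstacle is the use of the structure theorem: one must be sure that the reduced boundary in the sense of \Cref{reduced free boundary} contains, or coincides up to the relevant set with, the De Giorgi reduced boundary. It is this identification that makes $|D\chi_{\{u>0\}}|$ vanish on balls avoiding $\overline{\partial_{\reduced}\{u>0\}}$. Two further points need care. First, points of the De Giorgi reduced boundary do satisfy the measure-theoretic tangent-plane condition (blow-up convergence to a half-space in $L^1_{\loc}$), so they lie in $\partial_{\reduced}\{u>0\}$ as defined. Second, the support of $|D\chi_E|$ is the closure of the De Giorgi reduced boundary, and this closure is contained in $K$. Beyond these, the remaining work is to choose radii so that $B_{3\rho}(x)\subset\Omega'\Subset\Omega$, where \eqref{positive density} and \eqref{growth} hold uniformly.
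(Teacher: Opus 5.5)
Your proposal is correct and follows essentially the same route as the paper: you cover $\partial\{u>0\}\setminus\overline{\partial_{\reduced}\{u>0\}}$ by balls with $|\{u=0\}\cap B_{3\rho}(x)|=0$, obtained from the finite-perimeter dichotomy with the alternative $|\{u>0\}\cap B|=0$ ruled out by \eqref{positive density}, then apply \Cref{Hausdorff local} and pass to a countable subcover. The only difference is that you derive the dichotomy yourself from $\operatorname{supp}|D\chi_{\{u>0\}}|=\overline{\partial^*\{u>0\}}\subset\overline{\partial_{\reduced}\{u>0\}}$ and connectedness of the ball, where the paper simply cites Maggi (and, as a side remark, continuity of $u$ alone already excludes $|\{u>0\}\cap B|=0$, since $\{u>0\}$ is open and $x$ lies in its boundary).
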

\begin{proof} 
    First, we observe that
    \begin{equation}\label{3.3}
    x\in\partial\{u>0\}\setminus\overline{\partial_{\reduced}\{u>0\}}\iff|\{u=0\}\cap B_r(x)|=0.
    \end{equation}
    Indeed, we know that locally $|\partial\{u>0\}|=0$ (since the free boundary has locally finite $(n-1)$-dimensional Hausdorff measure, \cite[Theorem 6.1]{ATU25}). This means that $|\{u=0\}|=|\interior\{u=0\}|$. Observe that if $x\in\partial\{u>0\}$, then for some $r>0$,
    \begin{equation}\label{3.2}
    x\in\partial\{u>0\}\setminus\partial(\interior\{u=0\}) \iff |\{u=0\}\cap B_r(x)|=0.
    \end{equation}
    From the general theory of sets with locally finite perimeter (see, for example, \cite[Proposition 12.19, (15.3) and (16.27)]{M12}), one has $x\in \partial\{u>0\}\setminus\overline{\partial_{\reduced}\{u>0\}}$ if and only if for some $r>0$ either
    \begin{equation}\label{(a)}
    |\{u=0\}\cap B_r(x)|=0
    \end{equation}
    or
    \begin{equation}\label{(b)}
    |\{u>0\}\cap B_r(x)|=0.
    \end{equation}
    But \eqref{positive density} excludes \eqref{(b)}. Thus, only \eqref{(a)} is possible, and \eqref{3.3} follows. 
    
    Note that \eqref{3.2} yields
    $$
    \overline{\partial_{\reduced}\{u>0\}}=\partial\left(\interior\{u=0\}\cap\Omega\right).
    $$
    We then cover $\partial\{u>0\}\setminus\overline{\partial_{\reduced}\{u>0\}}$ with balls $\left\{B_{r(x)}(x)\right\}$ satisfying \eqref{(a)}. By \eqref{step1}, for each $x\in\partial\{u>0\}\setminus\overline{\partial_{\reduced}\{u>0\}}$, one has
    $$
    \mathcal{H}^{n-\frac{2-\gamma}{p-\gamma}p}\left(\{u=0\}\cap B_{\frac{r(x)}{3}}(x)\right)=0,
    $$
    and selecting a countable subcovering from $\left\{B_{r(x)}(x)\right\}$ gives the desired result.
\end{proof}

\section{Rectifiability of the free boundary}\label{s5}
We now have all the ingredients to prove \eqref{reduced}, extending \cite[Theorem 3.2]{Ph83} for $p=2$ to the nonlinear setting with $p\ge2$.
\begin{theorem}\label{Hausdorff reduced}
If $u$ is a minimizer of \eqref{mimization functional}, then there exists $r_0>0$, depending only on $p$, $\gamma$ and $n$, such that for any $r\le r_0$ and $x_0\in\partial\{u>0\}\cap B_r(x_0)$, one has
$$
    \mathcal{H}^{n-1}\left((\partial\{u>0\}\setminus\partial_{\reduced}\{u>0\})\cap B_r(x_0)\right)=0.
$$
\end{theorem}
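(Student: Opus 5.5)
We follow Phillips' argument for $p=2$ (\cite[Theorem 3.2]{Ph83}). We split the bad set
$$
\partial\{u>0\}\setminus\partial_{\reduced}\{u>0\}
=\Big(\partial\{u>0\}\setminus\overline{\partial_{\reduced}\{u>0\}}\Big)\cup\Big(\overline{\partial_{\reduced}\{u>0\}}\setminus\partial_{\reduced}\{u>0\}\Big).
$$
\textbf{The first piece.} Since $p\ge2>\gamma$, we have
$$
\frac{(2-\gamma)p}{p-\gamma}\ge1,
$$
with equality only when $p=2$. Hence $n-\frac{2-\gamma}{p-\gamma}p\le n-1$. Vanishing of the lower-dimensional Hausdorff measure forces vanishing of the higher one, so \Cref{n-2 Hausdorff} gives $\H$-measure zero for the first piece.

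\textbf{The second piece: reduction to a density statement.} By \eqref{3.2}, the closure $\overline{\partial_{\reduced}\{u>0\}}$ coincides with $\partial(\interior\{u=0\})$ locally. So it suffices to show that $\H$-almost every point of the topological boundary of $\{u=0\}$ lies in $\partial_{\reduced}\{u>0\}$. The key fact from finite perimeter theory (Federer's theorem, \cite[Theorem 16.2]{M12}) is this: since $\{u>0\}$ has locally finite perimeter, $\H$-a.e. point of $\R^n$ either lies in $\partial_{\reduced}\{u>0\}$ or has Lebesgue density $0$ or $1$ for $\{u>0\}$. By \eqref{positive density}, density $0$ is excluded at free boundary points. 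It therefore remains to show that the set
$$
E:=\left\{x\in\partial\{u>0\}\cap B_r(x_0):\ \lim_{\rho\to0}\frac{|\{u=0\}\cap B_\rho(x)|}{\rho^n}=0\right\}
$$
has $\H(E)=0$.

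\textbf{The main step: $\H(E)=0$.} Here we use \Cref{integrability lemma}, and we expect this to be the main obstacle. Set $w:=u^{1/\alpha}$ with $\alpha=\frac{p}{p-\gamma}$. By \Cref{subsol}, $\mu:=\Delta_p w\ge0$ in $\{0<u\le\varepsilon_\star^\alpha\}$. By \Cref{integrability lemma} together with the boundedness of $|Dw|$ from \Cref{magic theorem}, $\Delta_p w$ is then a Radon measure on $B_r(x_0)$, and its absolutely continuous part on $\{u>0\}$ is in $L^1$. The plan is to compare two lower bounds at points $x\in E$, on every small ball $B_\rho(x)$:

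\begin{enumerate}[(i)]
\item \emph{Divergence theorem lower bound.} By nondegeneracy \eqref{nondegeneracy} and the growth bound \eqref{growth}, $w$ grows exactly linearly away from $x$. We have $|Dw|\le C$. Because $\{u=0\}$ has density zero at $x$, the set $\{u>0\}$ fills $B_\rho(x)$ up to $o(\rho^n)$. Combined with nondegeneracy, this should force the flux $\int_{\partial B_\rho(x)}|Dw|^{p-2}Dw\cdot\nu$ to be at least $c\rho^{n-1}$. Such a bound cannot hold at points where $w$ looks like a one-sided linear function $(y\cdot e)^+$; at density-one points the blow-up should instead be two-sided, resembling $|y\cdot e|$, which carries a singular mass of $\Delta_p w$ on the zero set.
\item \emph{Covering argument.} This gives $\mu(B_\rho(x))\ge c\rho^{n-1}$ for all $x\in E$ and all small $\rho$. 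On the other hand, the singular part of $\mu$ is concentrated on $\{u=0\}\cap B_r(x_0)$, a set of Lebesgue measure zero. We then apply a Vitali-type covering argument, as in \eqref{Federer69}, to the finite measure $\mu$ restricted to $\{u=0\}$. By the upper density theorem, if $\H(E)>0$ we could find an open set of small $\mu$-mass covering $E$ while the density bound forces large $\mu$-mass, a contradiction.
\end{enumerate}

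The delicate part is the lower flux bound in step (i) and its localization. We would prove it by a blow-up and compactness argument, using the $C^{1,\beta}$ estimates \eqref{local regularity} for $w$ (obtained via \Cref{magic theorem}). Along a subsequence, the blow-ups of $w$ at a point of $E$ converge to a nonnegative, $p$-subharmonic limit with linear growth whose zero set has density zero. We then need to show that such a limit has a nontrivial singular $p$-Laplacian on its zero set. This is where the nonlinear structure departs from Phillips' linear computation, and it is the step we would check first.
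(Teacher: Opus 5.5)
\textbf{There is a genuine gap in the main step.} Your reduction is the same as the paper's. By Federer's theorem and \eqref{positive density}, it suffices to show $\mathcal{H}^{n-1}(E)=0$, where $E$ is the set of free boundary points at which $\{u=0\}$ has density zero. Your ``first piece'' is already contained in $E$. Also, $\frac{(2-\gamma)p}{p-\gamma}$ equals $2$, not $1$, at $p=2$, and it is always strictly larger than $1$.

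The main step, as designed, cannot give $\mathcal{H}^{n-1}(E)=0$. The flux bound in (i) does not separate $E$ from reduced-boundary points. For $w=|a|(y\cdot e)^+$ one has $\Delta_p w=|a|^{p-1}\mathcal{H}^{n-1}\llcorner\{y\cdot e=0\}$, so its flux through $\partial B_\rho$ is also of order $\rho^{n-1}$. More decisively, a lower bound $\mu(B_\rho(x))\ge c\rho^{n-1}$ on $E$ only yields $\mathcal{H}^{n-1}(E)\le C\mu(E)$ by the density theorem. That is finiteness, which is already known, not nullity. An open set of small $\mu$-mass covering $E$ exists only if $\mu(E)=0$, and nothing in (ii) provides this. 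The singular part of $\mu$ sits on $\partial\{u>0\}\supset E$; it does not sit on $\{u=0\}$ viewed as a Lebesgue-null set, since $\{u=0\}$ may have positive measure. Moreover, the two-sided profile $|a||y\cdot e|$ carries mass $2|a|^{p-1}\mathcal{H}^{n-1}$ precisely on such points. For the same reason, your final target (that the blow-up has nontrivial singular $p$-Laplacian on its zero set) holds for $|y\cdot e|$ and gives no contradiction. That function is nonnegative, $p$-subharmonic, of linear growth, and has a zero set of density zero.

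\textbf{Two ideas are missing.}

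(a) Use the $L^1$ bound of \Cref{integrability lemma} through \eqref{Federer69} with $s=1$, not merely to make $\Delta_p w$ a measure. For $\mathcal{H}^{n-1}$-a.e.\ $x\in E$ this gives $\rho^{1-n}\int_{B_\rho(x)\cap\{u>0\}}|\Delta_p w|\,dy\to0$. This quantity is invariant under $u\mapsto u(x+\rho y)/\rho^{\alpha}$. Hence any blow-up $u_0$ satisfies $\Delta_p w_0=0$, equivalently $|Dw_0|^p=\frac{p}{p-1}\alpha^{-p}$, in $\{u_0>0\}$. Take the blow-up in $C^1$ for $u$: $w$ is only Lipschitz across the free boundary, not $C^{1,\beta}$. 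By \eqref{nondegeneracy}, $\{u_0=0\}$ is Lebesgue-null near the origin. Together these force $w_0$ to be affine on each component of its positivity set, hence $w_0=|a||y\cdot e|$.

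(b) Rule this profile out by minimality. The blow-up $u_0$ is a minimizer with Lebesgue-null zero set. So \Cref{Hausdorff local}, via the second-variation bound $u_0^{\gamma-2}\in L^1_{\loc}$, gives $\mathcal{H}^{n-\frac{(2-\gamma)p}{p-\gamma}}(\{u_0=0\}\cap B_{1/4})=0$. A hyperplane violates this because $\frac{(2-\gamma)p}{p-\gamma}>1$. Your plan never invokes minimality of the blow-up, and without it there is no obstruction at all.
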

\begin{proof}
Using Federer's decomposition for sets with locally finite perimeter, \cite[Section 4.5.6, page 478]{F69}, we can decompose the free boundary $\partial\{u>0\}$ as follows:
$$
\partial\{u>0\}=\partial_{\reduced}\{u>0\}\cup A\cup B,
$$
where $\H(B)=0$ and $x_0\in A$ if and only if either 
\begin{equation}\label{zero density in u>0}
    \limsup_{r\to0}\frac{|\{u>0\}\cap B_r(x_0)|}{|B_r(x_0)|}=0
\end{equation}
or
\begin{equation}\label{zero density in u=0}
    \limsup_{r\to0}\frac{|\{u=0\}\cap B_r(x_0)|}{|B_r(x_0)|}=0
\end{equation}
We aim to show that $\H(A)=0$. As before, \eqref{positive density} excludes \eqref{zero density in u>0}. Thus, $x_0\in A$ if and only if \eqref{zero density in u=0} holds. Below, we show that it is not possible.

\textsc{Step 1.} Note that
\begin{equation*}\label{integrability}
    u^{-\frac{1}{\alpha}}\left|\frac{p}{p-1}-\frac{|Du|^p}{u^\gamma}\right|\in L^{1}\left(\{u>0\}\cap B_r(x_0)\right),
\end{equation*}
where $\alpha:=\frac{p}{p-\gamma}$. Indeed, it follows from \Cref{integrability lemma}, as by direct calculation, using \eqref{EL}, in $\{u>0\}$ one has
\begin{equation*}
    \Delta_p(u^{\frac{1}{\alpha}})=\frac{\gamma}{\alpha^{p-1}}\frac{p-1}{p}u^{-\frac{1}{\alpha}}\left[\frac{p}{p-1}-\frac{|Du|^p}{u^\gamma}\right].
\end{equation*}
Recalling \eqref{Federer69}, for $\H$ a.e. $x_0\in A$, we have
\begin{equation}\label{4.7}
    \lim_{r\to0}r^{1-n}\int_{\{u>0\}\cap B_r(x_0)} u^{-\frac{1}{\alpha}}\left|\frac{p}{p-1}-\frac{|Du|^p}{u^\gamma}\right|\,dx=0.
\end{equation}
	Set
	$$
	u_k(y):=\frac{u(x_0+r_ky)}{r_k^\alpha},\,\,\,y\in B_1.
	$$
    By \eqref{local regularity}, $u_k$ is uniformly bounded in $B_{\frac{1}{2}}$. Thus, there exists a function $u_0$ such that, up to a subsequence, 
    \begin{equation*}\label{strong convergence}
        u_k\to u_0\,\,\,\text{in}\,\,\,C^1(B_{\frac{1}{2}}). 
    \end{equation*}
    Furthermore, since $u_k$ is a local minimizer in $B_{\frac{1}{2}}$, so is $u_0$. Moreover,
    \begin{align*}
        &r^{1-n}\int_{\{u>0\}\cap B_r(x_0)} u^{-\frac{1}{\alpha}}\left|\frac{p}{p-1}-\frac{|Du|^p}{u^\gamma}\right|\,dx\\
        &=\int_{\{u_k>0\}\cap B_1} u_k^{-\frac{1}{\alpha}}\left|\frac{p}{p-1}-\frac{|Du_k|^p}{u_k^\gamma}\right|\,dx.
    \end{align*}
    Therefore, using Fatou's lemma and \eqref{4.7}, we get
    \begin{align*}
        0&=\lim_{k\to\infty}\int_{\{u_k>0\}\cap B_1} u_k^{-\frac{1}{\alpha}}\left|\frac{p}{p-1}-\frac{|Du_k|^p}{u_k^\gamma}\right|\,dx\\
        &\ge\int_{\{u_0>0\}\cap B_{\frac{1}{2}}} u_0^{-\frac{1}{\alpha}}\left|\frac{p}{p-1}-\frac{|Du_0|^p}{u_0^\gamma}\right|\,dx
    \end{align*}
    and hence,
    \begin{equation}\label{3.9}
        \frac{p}{p-1}=\frac{|Du_0|^p}{u_0^\gamma}\,\,\,\text{on}\,\,\,\{u_0>0\}\cap B_{\frac{1}{2}}.
    \end{equation}
    \textsc{Step 2.} Next, we show that
    \begin{equation}\label{blow-up limit zero set zero measure}
        |\{u_0=0\}\cap B_{\frac{1}{4}}|=0.
    \end{equation} 
    Indeed, if $|\{u_0=0\}\cap B_{\frac{1}{4}}|>0$, then $u_0=0$ on a ball $B_\rho(z)\subset B_{\frac{1}{4}}$. Thus,
    \begin{equation}\label{converges to zero}
        u_k\to0\,\,\,\text{on}\,\,\,B_\rho(z).
    \end{equation}    
    As $x_0\in A$, \eqref{zero density in u=0} yields, up to a subsequence,
    $$
    |\{u_k=0\}\cap B_{\frac{1}{2}}|\to0,\,\,\,\text{as}\,\,\,k\to\infty.
    $$
    Hence, for $k$ large enough, there exist $y_k\in B_{\frac{\rho}{2}}(z)$ such that $u_k(y_k)>0$. Since $u_k$ is a local minimizer, by \eqref{nondegeneracy} one has
    $$
    \sup_{\partial B_{\frac{\rho}{4}}}u_k\ge c\rho^\alpha,
    $$
    which contradicts \eqref{converges to zero}. Hence, \eqref{blow-up limit zero set zero measure} holds.

    \textsc{Step 3.} In the final step, we show that \eqref{blow-up limit zero set zero measure} implies that $u_0$ is one-dimensional, leading to a contradiction. Indeed, since $u_0$ is a local minimizer, it satisfies \eqref{EL}. Direct calculation then reveals that in $\{u_0>0\}$ one has
    \begin{equation}\label{v is p-harmonic}
        v\Delta_pv=\frac{\gamma}{\alpha^{p-1}}-(\alpha-1)(p-1)|Dv|^p,
    \end{equation}
    where
    $$
    v:=u_0^{\frac{1}{\alpha}}.
    $$
    On the other hand, using \eqref{3.9}, one gets
    \begin{equation}\label{gradient is constant}
        |Dv|^p=\frac{p}{p-1}\alpha^{-p}:=|a|^p,
    \end{equation}
    therefore, \eqref{v is p-harmonic} yields in $\{u_0>0\}$
    $$
    v\Delta_pv=\alpha^{-p}(\gamma\alpha-p(\alpha-1))=0,
    $$
    and since $\{v>0\}=\{u_0>0\}$, we conclude that $v$ is $p-$harmonic in its positivity set. As by \eqref{gradient is constant} it also has a constant magnitude of gradient, then $v$ is, in fact, harmonic in its positivity set. A harmonic function with a constant magnitude of the gradient is affine. Indeed, since
    $$
    0=\Delta(|Dv|^2)=2|D^2v|^2,
    $$
    then $D^2v\equiv0$. Thus,
    $$
    v(x)=a\cdot x+b.
    $$
    Since $v$ is nonnegative and vanishes at the origin, then $v(x)=(a\cdot x)^+$. Thus, $v(x)=|a|(\nu\cdot x)^+$, where $\nu:=a/|a|$ is a unit vector. Rotating the coordinates, we get 
    $$
    v(x)=|a|x_n^+\,\,\,\text{on}\,\,\,B_{\frac{1}{4}}.
    $$
    Therefore,
    $$
    \{u_0=0\}=\{x_n=0\}.
    $$
    On the other hand, by \eqref{blow-up limit zero set zero measure} and \Cref{Hausdorff local} (note that $\frac{2-\gamma}{p-\gamma}p>1$),  one should have
    $$
    \infty=\mathcal{H}^{n-\frac{2-\gamma}{p-\gamma}p}\left(\{x_n=0\}\cap B_{\frac{1}{4}}\right)=\mathcal{H}^{n-\frac{2-\gamma}{p-\gamma}p}\left(\{u_0=0\}\cap B_{\frac{1}{4}}\right)=0,
    $$
    a contradiction.
\end{proof}

\bigskip

{\small \noindent{\bf Acknowledgments.}} {\footnotesize This publication is based upon work supported by the King Abdullah University of Science and Technology (KAUST) under Award No. ORFS-CRG12-2024-6430. The author thanks Diego Marcon (UFRGS, Brazil) for helpful discussions at an early stage of this project.}

\end{document}